\documentclass[]{spie}  

 
\usepackage{amsmath,amsfonts,amssymb,amsthm}
\usepackage{graphicx}
\usepackage[colorlinks=true, allcolors=blue]{hyperref}

\usepackage[noabbrev,capitalise]{cleveref}

\newtheorem{thm}{Theorem}
\newtheorem{conj}[thm]{Conjecture}
\newtheorem{prop}[thm]{Proposition}

\crefname{prop}{Proposition}{Propositions}

\title{Linear programming bounds for cliques in Paley graphs}

\author{Mark Magsino}
\author{Dustin G. Mixon}
\author{Hans Parshall}
\affil{Department of Mathematics, The Ohio State University, Columbus, OH 43201}

\authorinfo{Send correspondence to Hans Parshall: E-mail: parshall.6@osu.edu}

\pagestyle{empty} 
\setcounter{page}{301} 
 
\begin{document} 
\maketitle

\begin{abstract}
The Lov\'{a}sz theta number is a semidefinite programming bound on the clique number of (the complement of) a given graph.
Given a vertex-transitive graph, every vertex belongs to a maximal clique, and so one can instead apply this semidefinite programming bound to the local graph.
In the case of the Paley graph, the local graph is circulant, and so this bound reduces to a linear programming bound, allowing for fast computations.
Impressively, the value of this program with Schrijver's nonnegativity constraint rivals the state-of-the-art closed-form bound recently proved by Hanson and Petridis.
We conjecture that this linear programming bound improves on the Hanson--Petridis bound infinitely often, and we derive the dual program to facilitate proving this conjecture.
\end{abstract}

\keywords{linear programming, Lov\'{a}sz theta number, Paley graph}

\section{INTRODUCTION}

The \textbf{Paley graph} $G_p$ is defined for every prime $p \equiv 1~(\operatorname{mod}4)$ with vertex set $\mathbf{F}_p = \{0, 1, \ldots, p - 1\}$, the finite field of $p$ elements, and an edge between $x,y \in \mathbf{F}_p$ if and only if $x - y \in Q_p$, where
\[
Q_p := \{x \in \mathbf{F}_p : \text{there exists } y \in \mathbf{F}_p \text{ such that } x = y^2\}
\]
is the multiplicative subgroup of quadratic residues modulo $p$.  The Paley graphs provide a family of quasi-random graphs (see Chung, Graham and Wilson\cite{chung89}) with several nice properties (see \S 13.2 in Bollobas\cite{bollobas01}).  For instance, the Paley graph $G_p$ is a so-called strongly regular graph in which every vertex has $(p - 1)/2$ neighbors, every pair of adjacent vertices share $(p - 5)/4$ common neighbors, and every pair of non-adjacent vertices share $(p - 1)/4$ common neighbors. 
The Paley graph of order $p$ can be used to construct an optimal packing of lines through the origin of $\mathbf{R}^{(p+1)/2}$, known as the corresponding Paley equiangular tight frame~\cite{strohmer03,renes07,waldron09}, and these packings have received some attention in the context of compressed sensing~\cite{bandeira13,bandeira17}.

For a simple, undirected graph $G = (V,E)$, we say $C \subseteq V$ is a \textbf{clique} if every pair of vertices in $C$ is adjacent, and we define the \textbf{clique number} of $G$, denoted by $\omega(G)$, to be the size of the largest clique in $G$.  It is a famously difficult open problem to determine the order of magnitude of $\omega(G_p)$ as $p \rightarrow \infty$.  The best known closed-form bounds that are valid for all primes $p \equiv 1~(\operatorname{mod}4)$ are given by
\begin{equation}\label{eq:bestknown}
(1 + o(1)) \frac{\log(p)}{\log(4)} \leq \omega(G_p) \leq \frac{\sqrt{2p - 1} + 1}{2}.
\end{equation}

The lower bound in \eqref{eq:bestknown} is due to Cohen~\cite{cohen88}.  The same lower bound, with a weaker $o(1)$ term, is actually valid for any self-complementary graph.  Recall that the Ramsey number $R(s)$ is the least integer such that every graph on at least $R(s)$ vertices contains either a clique of size~$s$ or a set of $s$ pairwise non-adjacent vertices.  Then for any self-complementary graph $G$ on at least $R(s)$ vertices, it holds that $\omega(G) \geq s$.  Together with the classical upper bound of $R(s) \leq \binom{2s - 2}{s - 1}$ by Erd\H{o}s and Szekeres~\cite{erdos35}, it is straightforward to establish the lower bound in \eqref{eq:bestknown}.  The work of Graham and Ringrose~\cite{graham90} on least quadratic non-residues shows that there exists $c > 0$ such that $\omega(G_p) \geq c\log(p)\log\log\log(p)$ for infinitely many primes $p$, and so the lower bound in \eqref{eq:bestknown} is not sharp in general.

The upper bound in \eqref{eq:bestknown} was proved very recently by Hanson~and~Petridis~\cite{hanson19} using a clever application of Stepanov's polynomial method.  This improved upon the previously best known closed-form upper bounds of $\omega(G_p) \leq \sqrt{p - 4}$, proved by Maistrelli and Penman~\cite{maistrelli06}, and $\omega(G_p) \leq \sqrt{p} - 1$, proved to hold for a majority of primes $p \equiv 1~(\operatorname{mod}4)$ by Bachoc, Matolcsi and Ruzsa\cite{bachoc13}.  Numerical data for primes $p < 10000$ by Shearer~\cite{shearer86} and Exoo~\cite{exooData} suggests that there should be a polylogarithmic upper bound on $\omega(G_p)$, but it remains an open problem to determine whether there exists $\epsilon > 0$ such that $\omega(G_p) \leq p^{1/2 - \epsilon}$ infinitely often.

This open problem bears some significance in the field of compressed sensing.
In particular, Tao~\cite{taoBlog} posed the problem of finding an explicit family $\{\Phi_n\}$ of $m\times n$ matrices with $m=m(n)\in[0.01n,0.99n]$ and $n\to\infty$ for which there exists $\alpha\geq0.51$ such that for every $n$, it holds that
\[
0.5\cdot\|x\|_2^2
~
\leq
~
\|\Phi_nx\|_2^2
~
\leq
~
1.5\cdot\|x\|_2^2
\]
for every $x\in\mathbb{R}^n$ with at most $n^{\alpha}$ nonzero entries.
Such matrices are known as restricted isometries.
Families of restricted isometries are known to exist for every $\alpha<1$ by an application of the probabilistic method, and yet to date, the best known explicit construction~\cite{bourgain11,mixon15} takes $\alpha\leq\frac{1}{2}+10^{-23}$.
It is conjectured~\cite{bandeira17} that the Paley equiangular tight frame behaves as a restricted isometry for a larger choice of $\alpha$, but proving this is difficult, as it would imply the existence of $\epsilon > 0$ such that $\omega(G_p) \leq p^{1/2 - \epsilon}$ for all sufficiently large $p$.
As partial progress along these lines, the authors recently established that the singular values of random subensembles of the Paley equiangular tight frame obey a Kesten--McKay law~\cite{magsino19}.

The goal of this paper is to describe a promising approach to find new upper bounds on the clique numbers of Paley graphs.  In Section 2, we recall a semidefinite programming approach of Lov\'{a}sz~\cite{lovasz79} that yields bounds on the clique numbers of arbitrary graphs.  By passing to an appropriate subgraph of $G_p$, we show that this produces numerical bounds on $\omega(G_p)$ that usually coincide with the Hanson--Petridis bound and sometimes improve upon it.  In Section 3, we show how to compute these bounds by linear programming, extending the range in which we are able to produce computational evidence.  In Section 4, we derive the relevant dual program and summarize how one might use weak duality to prove a new bound on $\omega(G_p)$ by constructing appropriate number-theoretic functions; see \cref{prop:paleylemma}.  We then conclude with suggestions for future work in this direction.

\section{SEMIDEFINITE PROGRAMMING BOUNDS}
For a graph $G = (V,E)$, its \textbf{complement} $\overline{G}$ is the graph on vertices $V$ with edges $\binom{V}{2} \setminus E$.  An \textbf{isomorphism} between graphs $G = (V,E)$ and $G' = (V',E')$ is a bijection $\varphi\colon V \rightarrow V'$ such that $E$ contains an edge between $v,w \in E$ if and only if $E'$ contains an edge between $\varphi(v),\varphi(w) \in V'$, and an \textbf{automorphism} is an isomorphism between $G$ and itself.  When $G$ is isomorphic to $\overline{G}$, we say that $G$ is \textbf{self-complementary}.  We say $G = (V,E)$ is \textbf{vertex-transitive} if, for every pair of vertices $v,w \in V$, there exists an automorphism $\varphi$ of $G$ with $\varphi(v) = \varphi(w)$.

In the sequel, we label the vertices of every graph $G$ on $n$ vertices by $\mathbf{Z}_n = \{0, 1, \ldots, n - 1\}$ and similarly index the rows and columns of matrices $X \in \mathbf{R}^{n \times n}$ by $\mathbf{Z}_n$ with addition considered modulo $n$.  The \textbf{Lov\'{a}sz theta number} for a graph $G$ on $n$ vertices is defined by the semidefinite program
\[
\vartheta_{\operatorname{L}}(G)
\quad
:=
\quad
\operatorname{max}  
\quad
\displaystyle \sum_{j = 0}^{n-1} \sum_{k = 0}^{n-1} X_{jk}
\quad
\text{s.t.}
\quad
\operatorname{Tr}X=1,
\quad
X_{jk}=0 ~~\forall\{j,k\}\in E(G),
\quad
X\succeq0.
\]
Lov\'{a}sz~\cite{lovasz79} proved the following.

\begin{prop}\label{prop:theta properties}
Let $G$ be any graph on $n$ vertices.
\begin{itemize}
\item[(i)] $\omega(G) \leq \vartheta_{\operatorname{L}}(\overline{G})$.
\item[(ii)] If $G$ is vertex-transitive, then $\vartheta_{\operatorname{L}}(G)\vartheta_{\operatorname{L}}(\overline{G}) = n$.
\end{itemize}
\end{prop}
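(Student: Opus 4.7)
The plan is to prove (i) by exhibiting an explicit rank-one feasible matrix, and (ii) by two separate inequalities: $\leq n$ via symmetrization over the automorphism group, and $\geq n$ via a tensor construction that is valid for an arbitrary graph. For (i), let $C \subseteq V(G)$ be a clique of size $\omega(G)$ and set $X := \frac{1}{|C|}\mathbf{1}_C\mathbf{1}_C^\top$, where $\mathbf{1}_C$ is the $0/1$ indicator vector of $C$. Then $X \succeq 0$, $\operatorname{Tr}(X) = 1$, and $X_{jk} = 0$ whenever $\{j,k\}$ is not contained in $C$. Every edge of $\overline{G}$ joins two vertices that cannot both lie in the clique $C$, so $X$ is feasible for $\vartheta_{\operatorname{L}}(\overline{G})$, with objective $|C|^2/|C| = \omega(G)$.

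For the $\leq n$ half of (ii), I would first symmetrize: averaging an optimum $X^*$ of $\vartheta_{\operatorname{L}}(G)$ over $\Gamma := \operatorname{Aut}(G) = \operatorname{Aut}(\overline{G})$ produces a $\Gamma$-invariant optimum $\overline{X}$ (the feasible set is $\Gamma$-stable and the objective is preserved by conjugation). Vertex-transitivity forces the diagonal entries to coincide and hence equal $1/n$, and forces the row sums to coincide, giving $\overline{X}\mathbf{1} = \tfrac{\vartheta_{\operatorname{L}}(G)}{n}\mathbf{1}$; symmetrize an optimum $\overline{Y}$ of $\vartheta_{\operatorname{L}}(\overline{G})$ analogously. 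The crux is that for distinct $i, j$ exactly one of $\{i,j\} \in E(G)$ or $\{i,j\} \in E(\overline{G})$ holds, so $\overline{X}_{ij}\overline{Y}_{ij} = 0$, whence $\operatorname{Tr}(\overline{X}\,\overline{Y}) = \sum_i \overline{X}_{ii}\overline{Y}_{ii} = 1/n$. With $P := J/n$ the projection onto the all-ones line, the decomposition $\overline{X} = \tfrac{\vartheta_{\operatorname{L}}(G)}{n} P + X_\perp$ with $X_\perp := (I - P)\overline{X}(I - P) \succeq 0$ (and analogously for $\overline{Y}$) annihilates the cross terms, producing $\tfrac{1}{n} = \tfrac{\vartheta_{\operatorname{L}}(G)\vartheta_{\operatorname{L}}(\overline{G})}{n^2} + \operatorname{Tr}(X_\perp Y_\perp) \geq \tfrac{\vartheta_{\operatorname{L}}(G)\vartheta_{\operatorname{L}}(\overline{G})}{n^2}$.

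The reverse inequality $\vartheta_{\operatorname{L}}(G)\vartheta_{\operatorname{L}}(\overline{G}) \geq n$ is valid for arbitrary graphs, and I would obtain it from the dual orthonormal-representation characterization of theta. Optimal unit vectors $u_i$ with $u_i \perp u_j$ for $\{i,j\} \in E(\overline{G})$ and unit handle $c$ satisfy $(c^\top u_i)^2 \geq 1/\vartheta_{\operatorname{L}}(G)$; dual optimizers $v_i, d$ for $\vartheta_{\operatorname{L}}(\overline{G})$ swap the roles of $G$ and $\overline{G}$. The tensors $u_i \otimes v_i$ form an orthonormal system, since for distinct $i, j$ exactly one of the two complementary orthogonality conditions applies. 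Bessel's inequality applied to $c \otimes d$ gives $\sum_i (c^\top u_i)^2 (d^\top v_i)^2 \leq 1$, while the pointwise lower bounds force this sum to be at least $n/(\vartheta_{\operatorname{L}}(G)\vartheta_{\operatorname{L}}(\overline{G}))$, concluding the proof. The main obstacle is reconciling the two characterizations: the $\leq n$ half is natural in the primal SDP where symmetrization is clean, while the $\geq n$ half needs the dual orthonormal-representation characterization, which the excerpt does not state explicitly and must be imported from Lov\'asz's original paper.
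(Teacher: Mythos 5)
Your part (i) is exactly the paper's argument: the scaled clique indicator $\frac{1}{\omega(G)}\mathbf{1}_C\mathbf{1}_C^T$ is feasible for $\vartheta_{\operatorname{L}}(\overline{G})$ and has objective value $\omega(G)$. For part (ii) you actually go beyond the paper, which gives no proof at all and simply cites Theorem 8 of Lov\'asz; your two-step argument is correct. The $\leq n$ half works as you say: averaging an optimizer over $\operatorname{Aut}(G)=\operatorname{Aut}(\overline{G})$ (the feasible set is compact, so optima exist, and it is invariant under conjugation by graph automorphisms) gives constant diagonal $1/n$ and constant row sums, the complementary-support observation gives $\operatorname{Tr}(\overline{X}\,\overline{Y})=1/n$, and the splitting $\overline{X}=\tfrac{\vartheta_{\operatorname{L}}(G)}{n}P+X_\perp$ is legitimate because $\mathbf{1}$ is an eigenvector of the symmetrized optimizer, so the cross terms vanish and $\operatorname{Tr}(X_\perp Y_\perp)\geq 0$ finishes it. The $\geq n$ half via tensors of orthonormal representations and Bessel's inequality is also correct, and indeed holds for all graphs; its only external input is the characterization $\vartheta_{\operatorname{L}}(G)=\min_{c,(u_i)}\max_i (c^Tu_i)^{-2}$ over unit vectors orthogonal across edges of $\overline{G}$, which you rightly flag as imported (it is one of Lov\'asz's equivalent formulations, obtainable from SDP duality with attainment; note the $\geq n$ step would survive with merely $\epsilon$-optimal representations). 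So rather than an alternative route, your (ii) is essentially a reconstruction of Lov\'asz's own proof, filling in the step the paper outsources to the citation; what it buys is a self-contained argument, at the cost of needing the dual (orthonormal-representation) characterization in addition to the primal SDP used everywhere else in the paper.
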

\begin{proof}
For (i), suppose $C \subseteq \mathbf{Z}_n$ is a maximal clique in $G$, consider the indicator function $1_C\colon \mathbf{Z}_n \rightarrow \{0,1\}$ as a column vector in $\mathbf{R}^n$ indexed by $\mathbf{Z}_n$, and put $X := \frac{1}{\omega(G)} 1_C 1_C^T$.  Then $X$ is feasible in the program $\vartheta_{\operatorname{L}}(\overline{G})$.
Counting the nonzero entries of $X$ then gives
\[
\omega(G) = \sum_{j = 0}^{n - 1} \sum_{k = 0}^{n - 1} X_{jk} \leq \vartheta_{\operatorname{L}}(\overline{G}).
\]
The proof of (ii) is more involved; see Theorem 8 in Lov\'{a}sz~\cite{lovasz79}.
\end{proof}

As a consequence of \cref{prop:theta properties}, every self-complementary vertex-transitive graph $G$ on $n$ vertices satisfies $\omega(G) \leq \vartheta_{\operatorname{L}}(G) = \sqrt{n}$.  This is enough to recover the well-known bound of $\omega(G_p) \leq \sqrt{p}$.  Indeed, to show that $G_p$ is self-complementary, fix any nonzero quadratic non-residue $s \in \mathbf{F}_p^* \setminus Q_p$ and consider the bijection $\mu \colon \mathbf{F}_p \rightarrow \mathbf{F}_p$ defined by $\mu(x) := sx$.  Then for all $v,w \in \mathbf{F}_p$, we have $v - w \in Q_p$ if and only if $\mu(v) - \mu(w) = s(v - w) \not \in Q_p$.  That is, $\mu$ is an isomorphism between $G_p$ and $\overline{G}_p$.  To show that $G_p$ is vertex-transitive, let $a,b \in \mathbf{F}_p$ and consider the map $\tau \colon \mathbf{F}_p \rightarrow \mathbf{F}_p$ defined by $\tau(x) := x - a + b$.  Clearly $\tau(a) = b$.  To see that $\tau$ is an automorphism of $G_p$, it suffices to observe that for any two vertices $v,w \in \mathbf{F}_p$, $\tau(v) - \tau(w) = v - w$.  Hence, $\omega(G_p) \leq \vartheta_{\operatorname{L}}(G_p) = \sqrt{p}$ follows from \cref{prop:theta properties}.

We can improve upon this bound by focusing our attention to the neighborhood of $0$ in $G_p$, namely, the set $Q_p$ of quadratic residues.  Let $L_p$ denote the subgraph of $G_p$ induced by $Q_p$.  Since $G_p$ is vertex-transitive, there exists a maximal clique of $G_p$ containing the vertex $0$.  In particular, $\omega(L_p) = \omega(G_p) - 1$.  By \cref{prop:theta properties}(i), we conclude that
\begin{equation}\label{eq:thetaLpbound}
\omega(G_p) \leq \vartheta_{\operatorname{L}} (\overline{L}_p) + 1.
\end{equation}

For a graph $G$ on $n$ vertices, Schrijver~\cite{schrijver79} proposed strengthening $\vartheta_{\operatorname{L}}(G)$ to
\[
\vartheta_{\operatorname{LS}}(G)
\quad
:=
\quad
\operatorname{max}  
\quad
\displaystyle \sum_{j = 0}^{n-1} \sum_{k = 0}^{n-1} X_{jk}
\quad
\text{s.t.}
\quad
\operatorname{Tr}X=1,
\quad
X_{jk}=0 ~~\forall\{j,k\}\in E(G),
\quad
X\succeq0,
\quad
X\geq0,
\]
where $X\geq0$ denotes entrywise nonnegativity. Clearly $\vartheta_{\operatorname{LS}}(G) \leq \vartheta_{\operatorname{L}}(G)$, and the proof of \cref{prop:theta properties}(i) further establishes establishes $\omega(G) \leq \vartheta_{\operatorname{LS}}(G)$. This strengthening leads to the bound
\begin{equation}\label{eq:thetaLSbound}
\omega(G_p) \leq \vartheta_{\operatorname{LS}} (\overline{L}_p) + 1.
\end{equation}
To compare the bounds \eqref{eq:thetaLpbound} and \eqref{eq:thetaLSbound} to the Hanson--Petridis bound \eqref{eq:bestknown}, we set
\[
\operatorname{HP}(p) := \dfrac{\sqrt{2p - 1} + 1}{2}, 
\qquad
\operatorname{L}(p) := \vartheta_{\operatorname{L}} (\overline{L}_p) + 1,
\qquad
\operatorname{LS}(p) := \vartheta_{\operatorname{LS}} (\overline{L}_p) + 1,
\]
and we compare these in the range $p < 3000$ in \cref{fig:p3000} and \cref{table:p3000}.    We observe $\lfloor{\operatorname{L}(p)}\rfloor = \lfloor \operatorname{LS}(p) \rfloor = \lfloor \operatorname{HP}(p) \rfloor$ for most primes in this range, providing an equivalent upper bound on $\omega(G_p)$.  Interestingly, $\lfloor \operatorname{LS}(p) \rfloor = \lfloor \operatorname{HP}(p) \rfloor - 1$ for 17 values of $p < 3000$.  

Gvozdenovi\'{c}, Laurent and Vallentin\cite{gvoz09} used semidefinite programming to compute several values of $\operatorname{L}(p)$, which in their notation is $N_+(\operatorname{TH}(P_p))$.  For instance, they compute $\operatorname{L}(809)$ in 4.5 hours on a 3GHz  processor with 1GB of RAM.  They further introduced the so-called block-diagonal hierarchy of semidefinite programs, which allowed them to compute sharper bounds than $\operatorname{L}(p)$ somewhat more efficiently in the range $p \leq 809$.  In order to compute numerical values of $\operatorname{L}(p)$ and $\operatorname{LS}(p)$ efficiently, we leverage the symmetry of $L_p$ to reformulate both $\vartheta_{\operatorname{L}} (\overline{L}_p)$ and $\vartheta_{\operatorname{LS}} (\overline{L}_p)$ as linear programs in the next section.  Using this approach on a 3.4GHz processor with 8GB of RAM, we compute $\operatorname{L}(809)$ in under 20 seconds.

\begin{figure}
    \centering
    \includegraphics[width=3in]{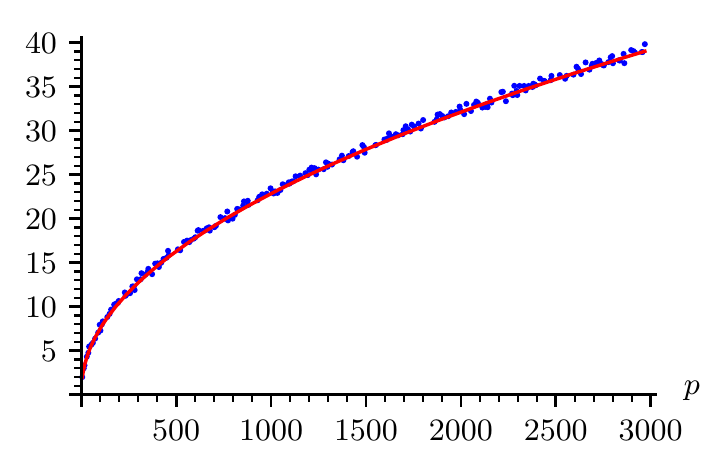}
    \hspace{1em}
    \includegraphics[width=3in]{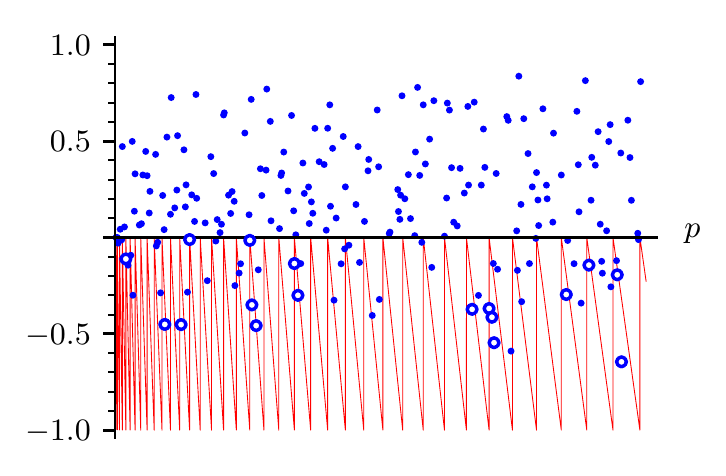}
    \caption{Comparison of $\operatorname{LS}(p)$ and $\operatorname{HP}(p)$ for the 211 primes $p \equiv 1~(\operatorname{mod}4)$ with $p < 3000$.  For 60 such primes, $\operatorname{LS}(p) \leq \operatorname{HP}(p)$.  For 17 such primes, $\operatorname{LS}(p) < \lfloor \operatorname{HP}(p) \rfloor$.
    \textbf{(left)} The blue points $(p,\operatorname{LS}(p))$ appear to concentrate around the red curve $y = \operatorname{HP}(p)$.
    \textbf{(right)} If a point $(p,\operatorname{LS}(p) - \operatorname{HP}(p))$ lies below the red curve $y = \lfloor \operatorname{HP} (p) \rfloor - \operatorname{HP}(p)$, then $\operatorname{LS}(p) < \lfloor \operatorname{HP}(p) \rfloor$, in which case we plot the point as a circle.  }
    \label{fig:p3000}
\end{figure}

\section{REDUCTION TO LINEAR PROGRAMMING}

Recall that a matrix $X \in \mathbf{R}^{n \times n}$ is \textbf{circulant} if $X_{j+1,k+1} = X_{jk}$ for all $j,k \in \mathbf{Z}_n$.  A graph $G$ is said to be circulant if there exists a labeling of its vertices such that its adjacency matrix is circulant.  We note that for every prime $p$, the graph $L_p$ is circulant.  Indeed, select a generator $\alpha$ of the mulitplicative subgroup $Q_p$, and order the elements of $Q_p$ as $1, \alpha, \ldots, \alpha^{n - 1}$.  Then $L_p$ is circulant since $\alpha^j - \alpha^k \in Q_p$ if and only if $\alpha^{j + 1} - \alpha^{k + 1} \in Q_p$.  Since the complement of a circulant graph is also circulant, it holds that $\overline{L}_p$ is circulant as well.

Schrijver~\cite{schrijver79} showed that the semidefinite programming formulations of both $\vartheta_{\operatorname{L}}$ and $\vartheta_{\operatorname{LS}}$ can be reduced to linear programs for certain classes of graphs.  In order to state one such linear programming formulation, we take the Fourier transform of $f \colon \mathbf{Z}_n \rightarrow \mathbf{C}$ to be the function $\widehat{f} \colon \mathbf{Z}_n \rightarrow \mathbf{C}$ defined by
\[
\widehat{f}(k) := \sum_{j = 0}^{n - 1} f(j) e^{-2\pi i jk / n}.
\]

\begin{prop}\label{prop: thetaL LP}
Let $G$ be any circulant graph with vertex set $\mathbf{Z}_n$.  Then
\begin{equation}
\label{eq:thetaLLP}
\vartheta_{\operatorname{L}}(G) 
\quad
=
\quad
\operatorname{max}
\quad
n \displaystyle \sum_{k = 0}^{n - 1} f(k)
\quad
\text{s.t.}
\quad
f(0) = \frac{1}{n},
\quad
f(k) = 0~~\forall\{0,k\}\in E(G),
\quad
\widehat{f} \geq 0.
\end{equation}
\end{prop}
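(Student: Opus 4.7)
The plan is to exploit the cyclic symmetry of $G$ to restrict the SDP defining $\vartheta_{\operatorname{L}}(G)$ to circulant feasible matrices $X$, and then to parameterize these by a single function $f\colon \mathbf{Z}_n \to \mathbf{R}$, at which point the positive semidefiniteness constraint becomes a pointwise nonnegativity constraint on the Fourier transform.

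For the first step, let $P \in \mathbf{R}^{n \times n}$ denote the permutation matrix for the cyclic shift $j \mapsto j+1$, so that $(P^s X (P^s)^T)_{jk} = X_{j-s,k-s}$. Because $G$ is circulant, $\{j,k\} \in E(G)$ if and only if $\{j-s,k-s\} \in E(G)$, and so the map $X \mapsto P^s X (P^s)^T$ preserves feasibility in the SDP defining $\vartheta_{\operatorname{L}}(G)$; it also preserves both the trace and the sum of entries. It follows that the symmetrized matrix
\[
X' := \frac{1}{n} \sum_{s=0}^{n-1} P^s X (P^s)^T
\]
is feasible with the same objective value as $X$, and $X'$ is circulant by construction. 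Hence the SDP may be restricted to circulant feasible $X$ without loss of generality.

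For the second step, every real circulant $X$ can be written as $X_{jk} = f(j-k)$ for some $f\colon \mathbf{Z}_n \to \mathbf{R}$. Direct substitution then gives $\operatorname{Tr}(X) = n f(0)$, the edge-vanishing constraints $X_{jk} = 0$ for $\{j,k\} \in E(G)$ reduce to $f(k) = 0$ for $\{0,k\} \in E(G)$ (again using that $G$ is circulant), and the objective $\sum_{jk} X_{jk}$ collapses to $n \sum_k f(k)$ since each value $f(m)$ appears exactly $n$ times in the double sum. Finally, the classical fact that the DFT simultaneously diagonalizes circulant matrices, with the eigenvalues of $X$ given by $\widehat{f}(k)$ for $k \in \mathbf{Z}_n$, translates $X \succeq 0$ into the constraint $\widehat{f} \geq 0$.

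I do not anticipate a serious obstacle, as the approach is a standard symmetry reduction paired with the standard Fourier diagonalization of circulant matrices. The only subtle point is to confirm that the symmetrization step surrenders no objective value, which is immediate from the linearity of the constraints and objective together with convexity of the feasible set; relatedly, although the LP imposes no explicit evenness constraint on $f$, the symmetry of $E(G)$ under $k \mapsto -k$ and the requirement $\widehat{f} \geq 0$ make this automatic and ensure the two formulations are genuinely equivalent.
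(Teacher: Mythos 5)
Your proposal is correct and follows essentially the same route as the paper: symmetrize a feasible $X$ over cyclic shifts (the paper's $X^{(\ell)}_{jk} := X^{(0)}_{j+\ell,k+\ell}$ averaging) to restrict to circulant matrices, then identify circulant feasible matrices with functions $f$ on $\mathbf{Z}_n$, using the DFT diagonalization to turn $X\succeq 0$ into $\widehat f\geq 0$. Your closing remark about evenness of $f$ being forced by $\widehat f\geq 0$ is a nice touch that the paper leaves implicit.
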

\begin{proof}
Let $\vartheta_{\operatorname{LLP}}(G)$ denote the right-hand side of $\eqref{eq:thetaLLP}$.
First, we show that $\vartheta_{\operatorname{LLP}}(G)\leq\vartheta_{\operatorname{L}}(G)$.
Take any feasible $f$ in the program $\vartheta_{\operatorname{LLP}}(G)$ and consider the circulant matrix $X$ defined by $X_{0k}:=f(k)$.
Then $\operatorname{Tr}(X) = 1$ follows from $f(0) = 1/n$, the edge constraints on $X$ follow from the edge constraints on $f$, and since the eigenvalues of $X$ are the Fourier coefficients $\{\widehat{f}(k) : k \in \mathbf{Z}_n\}$, we see that $X\succeq0$ follows from $\widehat{f}\geq0$.
Furthermore, $\sum_{j = 0}^{n - 1}\sum_{k = 0}^{n - 1} X_{jk}=n\sum_{k=0}^{n-1}f(k)$.
Since every feasible point in $\vartheta_{\operatorname{LLP}}(G)$ can be mapped to a feasible point in $\vartheta_{\operatorname{L}}(G)$ with the same value, we conclude that $\vartheta_{\operatorname{LLP}}(G)\leq\vartheta_{\operatorname{L}}(G)$.

For the other direction, fix any $X^{(0)}$ that is feasible in $\vartheta_{\operatorname{L}}(G)$, and for each $\ell \in \mathbf{Z}_n$, consider the matrix $X^{(\ell)} \in \mathbf{R}^{n \times n}$ defined by $X^{(\ell)}_{jk} := X^{(0)}_{j + \ell, k + \ell}$.  Then $X^{(\ell)}$ is also feasible in $\vartheta_{\operatorname{L}}(G)$ with the same value:
\[
\sum_{j = 0}^{n - 1}\sum_{k = 0}^{n - 1} X^{(\ell)}_{jk}
=\sum_{j = 0}^{n - 1}\sum_{k = 0}^{n - 1} X^{(0)}_{j + \ell, k + \ell}
=\sum_{j = 0}^{n - 1}\sum_{k = 0}^{n - 1} X^{(0)}_{jk}.  
\]
Averaging over this orbit produces a circulant matrix $X := \frac{1}{n} \sum_{\ell = 0}^{n - 1} X^{(\ell)}$ that, by convexity, is also feasible in $\vartheta_{\operatorname{L}}(G)$, and that, by linearity, has the same value.
Take $f \colon \mathbf{Z}_n \rightarrow \mathbf{R}$ defined by $f(k) := X_{0k}$ to obtain a feasible point in $\vartheta_{\operatorname{LLP}}(G)$ with the same value.
This implies the reverse inequality $\vartheta_{\operatorname{L}}(G)\leq\vartheta_{\operatorname{LLP}}(G)$.
\end{proof}

Arguing similarly establishes the following.

\begin{prop}\label{prop: thetaLS LP}
Let $G$ be any circulant graph with vertex set $\mathbf{Z}_n$.  Then
\begin{equation}
\label{eq:thetaLSLP}
\vartheta_{\operatorname{LS}}(G) 
\quad
=
\quad
\operatorname{max}
\quad
n \displaystyle \sum_{k = 0}^{n - 1} f(k)
\quad
\text{s.t.}
\quad
f(0) = \frac{1}{n},
\quad
f(k) = 0~~\forall\{0,k\}\in E(G),
\quad
\widehat{f} \geq 0,
\quad
f \geq 0.
\end{equation}
\end{prop}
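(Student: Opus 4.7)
The plan is to track the proof of \cref{prop: thetaL LP} line by line, with the only new bookkeeping being how the entrywise nonnegativity constraint $X\geq0$ translates to $f\geq0$ under the correspondence between circulant matrices and their first rows. Let $\vartheta_{\operatorname{LSLP}}(G)$ denote the right-hand side of \eqref{eq:thetaLSLP}.

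For the inequality $\vartheta_{\operatorname{LSLP}}(G)\leq\vartheta_{\operatorname{LS}}(G)$, I would take any feasible $f$ and form the circulant matrix $X$ with $X_{0k}:=f(k)$ exactly as in the proof of \cref{prop: thetaL LP}. The trace, edge-support, and positive semidefiniteness constraints carry over without change. The additional observation is that every entry $X_{jk}$ of a circulant matrix equals $f(k-j)$ for some index, so the entrywise bound $f\geq0$ forces $X\geq0$. The objective value is unchanged, giving a feasible point of $\vartheta_{\operatorname{LS}}(G)$ with the same value.

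For the reverse inequality, I would fix feasible $X^{(0)}$ in $\vartheta_{\operatorname{LS}}(G)$, define the cyclic shifts $X^{(\ell)}_{jk}:=X^{(0)}_{j+\ell,k+\ell}$, and average to obtain a circulant matrix $X:=\frac{1}{n}\sum_{\ell=0}^{n-1}X^{(\ell)}$. The argument from \cref{prop: thetaL LP} shows that each $X^{(\ell)}$ is feasible in $\vartheta_{\operatorname{LS}}(G)$ with the same objective value and that $X$ remains feasible by convexity; the only additional remark is that entrywise nonnegativity is preserved under both the shifts (relabeling indices) and the convex combination, so $X\geq0$. Setting $f(k):=X_{0k}$ yields a feasible point in $\vartheta_{\operatorname{LSLP}}(G)$ of the same value, and in particular $f\geq0$ follows from $X\geq0$.

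There is essentially no obstacle here beyond what already appears in \cref{prop: thetaL LP}: the entrywise nonnegativity constraint is the simplest possible kind of constraint to preserve under the two operations (lifting a function to its circulant matrix and averaging over cyclic shifts), so the proof reduces to adding one-line remarks at each step.
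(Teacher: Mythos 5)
Your proposal is correct and matches the paper's intended argument exactly: the paper proves \cref{prop: thetaLS LP} by simply noting that one argues as in \cref{prop: thetaL LP}, and your added remarks (that $X_{jk}=f(k-j)$ makes $f\geq0$ equivalent to $X\geq0$, and that entrywise nonnegativity is preserved under cyclic shifts and averaging) are precisely the bookkeeping that "arguing similarly" entails.
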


We used the linear program formulations in \cref{prop: thetaL LP,prop: thetaLS LP} to compute the values of $\operatorname{L}(p)$ and $\operatorname{LS}(p)$ reported in \cref{fig:p3000} and \cref{table:p3000}.

\section{DUAL CERTIFICATES}

In this section, we derive the dual program of $\vartheta_{\operatorname{LS}}(G)$ for arbitrary circulant graphs $G$.
Since every feasible point of the dual program of $\vartheta_{\operatorname{LS}}(\overline{L}_p)$ gives an upper bound on $\omega(G_p)$, this section might allow one to prove a new closed-form upper bound on $\omega(G_p)$.
Recall that for a closed convex cone $K \subseteq \mathbf{R}^n$, its \textbf{dual cone} is given by
\[
K^* := \{y \in \mathbf{R}^n : \langle x,y \rangle \geq 0 \text{ for all } x \in K\}.
\]
Given closed convex cones $K,M \subseteq \mathbf{R}^n$, the primal program
\begin{equation}
\label{eq:standardPrimal}
\operatorname{max}
\quad
\langle c,x \rangle
\quad
\text{s.t.}
\quad
b - Ax \in K,
\quad
x \in M
\end{equation}
has the corresponding dual program
\[
\operatorname{min}
\quad
\langle b,y \rangle
\quad
\text{s.t.}
\quad
A^Ty - c \in M^*,
\quad
y \in K^*.
\]
We will use the above formulation to derive a relatively clean expression for the dual program of \eqref{eq:thetaLSLP}.

\begin{prop}\label{prop: thetaLS dual}
Let $G$ be any circulant graph with vertex set $\mathbf{Z}_n$. Then
\[
\vartheta_{\operatorname{LS}}(G)
\quad
=
\quad
\operatorname{min}
\quad
f(0)
\quad
\text{s.t.} 
\quad
f(k) = 0~~\forall\{0,k\}\in E(\overline{G}),
\quad
f \geq g + 1,
\quad
\widehat{g} \geq 0.
\]
\end{prop}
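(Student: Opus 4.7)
The plan is to cast the LP from \eqref{eq:thetaLSLP} in the standard primal form of \eqref{eq:standardPrimal}, read off the dual via the formula displayed just afterward, and then perform a change of variables on the resulting dual so that it matches the form claimed in the proposition.

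First, I would take $x = f$ with $M = \mathbf{R}^n$ (so $M^* = \{0\}$ and the constraint $A^T y - c \in M^*$ becomes the equality $A^T y = c$) and objective vector $c = n\mathbf{1}$. The four primal constraints — $f(0) = 1/n$, $f(k) = 0$ on $N_G(0) := \{k : \{0,k\} \in E(G)\}$, $\widehat{f} \geq 0$, and $f \geq 0$ — package naturally into $b - Af \in K$ via
\[
K := \{0\} \times \{0\}^{|N_G(0)|} \times \mathbf{R}_{\geq 0}^n \times \mathbf{R}_{\geq 0}^n,
\]
with the first block enforcing $f(0) = 1/n$ (the only nonzero entry of $b$) and the other three blocks handling the remaining constraints in order.

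Next I would read off the dual: the variable splits as $(y_a, y_b, y_c, y_d) \in K^* = \mathbf{R} \times \mathbf{R}^{|N_G(0)|} \times \mathbf{R}_{\geq 0}^n \times \mathbf{R}_{\geq 0}^n$, the objective reduces to $y_a/n$, and $A^T y = c$ becomes the single coordinatewise equation
\[
h(k) - \widehat{y_c}(k) - y_d(k) = n, \qquad k \in \mathbf{Z}_n,
\]
where $h \in \mathbf{R}^n$ is assembled as $h(0) = y_a$, $h = y_b$ on $N_G(0)$, and $h = 0$ elsewhere — so $h$ vanishes exactly on $\{k : \{0,k\} \in E(\overline{G})\}$. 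I would then rescale by setting $f := h/n$ and $g := \widehat{y_c}/n$: the support condition on $h$ becomes $f(k) = 0$ for $\{0,k\} \in E(\overline{G})$; the objective becomes $f(0)$; the equation rewrites as $f - g - 1 = y_d/n$, so eliminating the nonnegative slack $y_d$ yields $f \geq g + 1$; and a brief Fourier inversion gives $\widehat{g}(j) = y_c(-j)$, turning $y_c \geq 0$ into $\widehat{g} \geq 0$. Strong duality, required to upgrade the resulting weak-duality inequality to equality, follows from standard polyhedral LP theory: the primal is feasible (take $f$ supported only at $0$ with value $1/n$, whose Fourier transform is the constant $1/n > 0$) and bounded above.

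The main hazard, I expect, is purely bookkeeping with the Fourier convention $\widehat{f}(k) = \sum_j f(j) e^{-2\pi i jk/n}$. In the primal, $\widehat{f} \geq 0$ implicitly forces $f$ to be even, which by the $k \mapsto -k$ symmetry of the circulant graph is harmless but should be acknowledged; in the dual, the raw variable is $\widehat{y_c}$, and extracting the clean statement $\widehat{g} \geq 0$ requires tracking the sign in $\widehat{g}(j) = y_c(-j)$ and the $1/n$ normalization. Beyond that, no step is conceptually deep — the argument is standard conic LP duality together with a relabeling.
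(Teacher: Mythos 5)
Your overall packaging is legitimate and in places differs harmlessly from the paper (you put $f \geq 0$ into the cone $K$ with an explicit slack $y_d$ and equality constraint $A^Ty = c$, while the paper keeps $f \in \mathbf{R}^n_{\geq 0}$ as the variable cone and gets an inequality; your change of variables $f = h/n$, $g = \widehat{y_c}/n$ and the strong-duality remark are fine). The genuine gap is in how you encode $\widehat{f} \geq 0$. That condition is not a real linear-inequality block: since $f$ is real, $\widehat{f}(k) = (Cf)(k) - i(Sf)(k)$ with $C$ the cosine and $S$ the sine transform, so $\widehat{f} \geq 0$ means the \emph{inequality} $Cf \geq 0$ together with the hidden \emph{equality} $Sf = 0$ (equivalently $Rf = f$, i.e.\ $f$ even). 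The conic duality template \eqref{eq:standardPrimal} you invoke is stated for real cones and a real matrix $A$, so you cannot take the third block of $A$ to be the complex DFT with cone $\mathbf{R}^n_{\geq 0}$; as written, the dual you ``read off,'' with the single equation $h - \widehat{y_c} - y_d = n\mathbf{1}$, is not the dual of a well-formed real LP. Properly encoded, the dual carries an extra free multiplier for the evenness/sine constraint, and the clean program in the proposition only appears after you show this multiplier can be eliminated. That elimination is precisely the substantive step in the paper's proof: it writes the primal with the blocks $P$, $I-R$, $-C$, derives the dual in variables $(u,v,w)$, and then uses the symmetrization $(u,v,w) \mapsto (Ru,-v,Rw)$ together with $RP = PR$, $RC = CR$ and averaging to force $Ru = u$, $Rw = w$ and kill the $v$ term before changing variables to $f = \frac{1}{n}Pu$, $g = \frac{1}{n}\widehat{w}$.

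Your sentence that evenness is ``harmless by the $k \mapsto -k$ symmetry'' names the right reason but does not supply the argument, and the omission propagates: your ``brief Fourier inversion'' $\widehat{g}(j) = y_c(-j)$ only produces a \emph{real} function $g$ (needed for $f \geq g+1$ to make sense) when $y_c$ is even, i.e.\ exactly when the unhandled symmetry bookkeeping has been done. Two standard repairs are available: either carry out the paper's dual-side symmetrization, or symmetrize on the primal side first — replace $\widehat{f} \geq 0$ by $Cf \geq 0$ and check the optimal value is unchanged by averaging $f$ with $Rf$ (using that $E(G)$ is closed under negation and $CR = C$) — after which your derivation goes through verbatim with $C$ in place of the DFT, using $C^2 = \frac{n}{2}(I + R)$ to recover $\widehat{g} \geq 0$ from $y_c \geq 0$. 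Without one of these, the step from your conic formulation to the stated dual does not follow from the cited duality formula.
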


\begin{proof}
By strong duality, it suffices to show that the right-hand side is the dual program of \eqref{eq:thetaLSLP}.
To this end, we first write the linear program \eqref{eq:thetaLSLP} in the form \eqref{eq:standardPrimal}.  We identify functions $f\colon\mathbf{Z}_n \rightarrow \mathbf{R}$ with column vectors in $\mathbf{R}^n$ indexed by $\mathbf{Z}_n$.  Let $P$ denote the projection operator defined by
\[
(Pf)(k) 
:= \left\{\begin{array}{cl} 0 &  \text{if } \{0,k\}\in E(\overline{G}) \\ f(k) & \text{otherwise.} \end{array}\right.
\]
Then $f(0)=\frac{1}{n}$ and $f(k) = 0$ for every $\{0,k\}\in E(G)$ if and only if $Pf = \frac{1}{n}\delta_0$.
Next, let $R$ denote the reversal operator defined by $(Rf)(k) := f(-k)$, and let $C$ denote the cosine transform defined by
\[
(Cf)(k) 
:= \sum_{j = 0}^{n - 1} f(j) \cos(2\pi j k / n).
\]
Then $f\in\mathbf{R}^n$ satisfies $\widehat{f} \geq 0$ if and only if $Rf = f$ and $Cf \geq 0$.
Overall, \eqref{eq:thetaLSLP} is equivalent to
\[
\vartheta_{\operatorname{LS}}(G)
\quad
=
\quad
\operatorname{max}
\quad
\langle n \mathbf{1}, f \rangle
\quad
\text{s.t.}
\quad
\left[\begin{array}{c}\frac{1}{n} \delta_0 \\ 0 \\ 0\end{array}\right] - \left[\begin{array}{c} P \\ I-R \\ -C \end{array}\right]f \in \{0 \in \mathbf{R}^n\} \times \{0 \in \mathbf{R}^n\} \times \mathbf{R}^n_{\geq 0},
\quad
f \in \mathbf{R}_{\geq 0}^n,
\]
where $\mathbf{R}_{\geq 0}$ denotes the set of nonnegative real numbers.
Since $(\{0 \in \mathbf{R}^n\} \times \{0 \in \mathbf{R}^n\} \times \mathbf{R}^n_{\geq 0})^*=\mathbf{R}^n\times\mathbf{R}^n\times\mathbf{R}_{\geq 0}^n$, the dual program is given by following, written in terms of dual variables $y=(u,v,w)\in(\mathbf{R}^n)^3$:
\begin{equation}\label{eq:dual1}
\operatorname{min}
\quad
\frac{1}{n} u(0)
\quad
\text{s.t.}
\quad
Pu + (I-R)v - Cw - n\mathbf{1} \in \mathbf{R}^n_{\geq 0},
\quad
w \in \mathbf{R}^n_{\geq 0}.
\end{equation}
Since $G$ is a circulant graph, we see that $\{0,k\}\in E(G)$ precisely when $\{0,-k\}\in E(G)$, and so $RP = PR$.
Also, $RC = CR$.
We apply these facts to observe that $(u,v,w)$ is feasible in~\eqref{eq:dual1} if and only if $(Ru,-v,Rw)$ is feasible in~\eqref{eq:dual1}, and with the same value.
Indeed, $R$ maps $\mathbf{R}^n_{\geq 0}$ to itself, and
\[
P(Ru)+(I-R)(-v)-C(Rw)-n\mathbf{1}
= R(Pu + (I-R)v - Cw - n\mathbf{1}),
\qquad
\frac{1}{n}(Ru)(0)=\frac{1}{n}u(0).
\]
By averaging these two feasible points, we obtain the following equivalent program:
\begin{equation}\label{eq:dual2}
\operatorname{min}
\quad
\frac{1}{n} u(0)
\quad
\operatorname{s.t.}
\quad
Ru = u,
\quad
Rw = w,
\quad
Pu \geq Cw + n\mathbf{1},
\quad
w \geq0.
\end{equation}
At this point, we may relax the constraint $Ru = u$ since $Cw + n\mathbf{1}$ is even.
Also, $w\in\mathbf{R}^n$ satisfies $Rw = w$ if and only if $Cw = \widehat{w}$.
Changing variables to $f = \frac{1}{n} Pu$ and $g = \frac{1}{n}\widehat{w}$ then gives the result.
\end{proof}

As such, given a circulant graph $G$, any $(f,g)$ that is feasible in the corresponding linear program in \cref{prop: thetaLS dual} yields an upper bound on $\vartheta_{\operatorname{LS}}(G)$.
Recalling \eqref{eq:thetaLSbound}, we now specialize to the case of Paley graphs:

\begin{prop}\label{prop:paleylemma}
Given a prime $p \equiv 1~(\operatorname{mod}4)$, let $\alpha$ denote a generator of the multiplicative group $Q_p$ of quadratic residues modulo $p$, and set $n = (p - 1)/2$. Suppose that $f,g\colon \mathbf{Z}_n \rightarrow \mathbf{R}$ together satisfy
\begin{itemize}
\item[(i)]
$f(k) = 0$ for every $k \in \mathbf{Z}_n$ with $\alpha^k - 1 \in Q_p$,
\item[(ii)]
$f \geq g + 1$, and
\item[(iii)]
$\widehat{g} \geq 0$.
\end{itemize}
Then $\omega(G_p) \leq f(0) + 1$.
\end{prop}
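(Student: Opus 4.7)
The plan is to recognize \cref{prop:paleylemma} as a direct specialization of \cref{prop: thetaLS dual} to $G = \overline{L}_p$, combined with the bound \eqref{eq:thetaLSbound}. Since $L_p$ is circulant on vertex set $\mathbf{Z}_n$ (via the ordering $1, \alpha, \ldots, \alpha^{n-1}$ of $Q_p$), and the complement of a circulant graph is circulant, $\overline{L}_p$ is circulant on $\mathbf{Z}_n$ as well, so \cref{prop: thetaLS dual} applies to it.

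The core of the argument is then to check that conditions (i)--(iii) are exactly the dual feasibility constraints. Applying \cref{prop: thetaLS dual} with $G = \overline{L}_p$, the edge condition reads $f(k) = 0$ for every $\{0,k\} \in E(\overline{\overline{L}_p}) = E(L_p)$. Under the circulant labeling $k \mapsto \alpha^k$, the edge $\{0,k\}$ appears in $L_p$ precisely when $\alpha^0 = 1$ and $\alpha^k$ are adjacent in $G_p$, i.e.\ when $\alpha^k - 1 \in Q_p$; this is condition (i). Conditions (ii) and (iii) are already written in the exact form appearing in \cref{prop: thetaLS dual}. Consequently, any $(f,g)$ satisfying (i)--(iii) is dual-feasible, so $f(0) \geq \vartheta_{\operatorname{LS}}(\overline{L}_p)$. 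Combining with \eqref{eq:thetaLSbound} gives $\omega(G_p) \leq \vartheta_{\operatorname{LS}}(\overline{L}_p) + 1 \leq f(0) + 1$.

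There is no serious obstacle here, as the argument is a bookkeeping exercise. The only point to be careful about is confirming that the condition $\alpha^k - 1 \in Q_p$ correctly encodes the edge $\{0,k\} \in E(L_p)$; this uses the circulant labeling and the fact that $-1 \in Q_p$ (since $p \equiv 1 \pmod 4$), which ensures the adjacency relation $x - y \in Q_p$ is symmetric.
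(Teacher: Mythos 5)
Your proposal is correct and matches the paper's own (implicit) argument: the proposition is exactly \cref{prop: thetaLS dual} applied to the circulant graph $\overline{L}_p$ under the labeling $k \mapsto \alpha^k$, where the edge constraint becomes $\alpha^k - 1 \in Q_p$, combined with the bound \eqref{eq:thetaLSbound}. Your extra remark about $-1 \in Q_p$ justifying the symmetry of the adjacency condition is a correct, if minor, detail.
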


Arguing similarly to \cref{prop: thetaLS dual} gives a comparable dual program for $\vartheta_{\operatorname{L}}(G)$.  In fact, the resulting program corresponds to adding the constraint $f = g + 1$ to the program in \cref{prop: thetaLS dual}.  Considering the numerical data in \cref{table:p3000}, we expect these bounds to match frequently.

\section{FUTURE WORK}
In this paper, we used linear programming to find numerical upper bounds on $\omega(G_p)$ that usually match and sometimes improve on the Hanson--Petridis bound $\lfloor \operatorname{HP}(p) \rfloor$.  Our experiments suggest the following.

\begin{conj}\label{conj:wewin}
For infinitely many primes $p \equiv 1~(\operatorname{mod}4)$, it holds that $\operatorname{LS}(p) < \lfloor \operatorname{HP}(p) \rfloor$.
\end{conj}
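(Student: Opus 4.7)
The plan is to invoke \cref{prop:paleylemma}: for each prime $p$ in a suitable infinite family, exhibit explicit $f_p, g_p \colon \mathbf{Z}_{(p-1)/2} \to \mathbf{R}$ satisfying (i)--(iii) with $f_p(0) + 1 < \lfloor \operatorname{HP}(p) \rfloor$. Since $\operatorname{HP}(p) = \tfrac{1}{2}(\sqrt{2p-1}+1)$ grows like $\sqrt{p/2}$, we need to undercut the integer immediately below $\operatorname{HP}(p)$, so the required slack is only $O(1)$ but must be maintained along infinitely many $p$.

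The first step is diagnostic. I would run the primal LP \eqref{eq:thetaLSLP} on $\overline{L}_p$ for the 17 exceptional primes $p < 3000$, extract the optimal dual certificates $(f_p, g_p)$ supplied by \cref{prop: thetaLS dual}, and look for arithmetic structure: whether the support of $f_p$ is a union of cosets of a subgroup of $Q_p$, whether $\widehat{g_p}$ concentrates on characters of such a subgroup, and whether the certificate depends on $p$ only through a small modulus. The combinatorics of the forced zero set $S_p := \{k : \alpha^k - 1 \in Q_p\}$ is controlled by Jacobi sums, which provides a lens for guessing the right ansatz.

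The second step is to parameterize. A natural choice makes $g_p$ a convolution square, $g_p := \sum_j c_j\, \mathbf{1}_{H_j} \ast \mathbf{1}_{-H_j}$, where the $H_j$ range over cosets of subgroups of $(\mathbf{Z}_{(p-1)/2},+)$; then $\widehat{g_p} = \sum_j c_j |\widehat{\mathbf{1}_{H_j}}|^2$ is automatically nonnegative with $c_j \geq 0$, so (iii) holds. One then sets $f_p := g_p + 1 + h_p$ with $h_p \geq 0$ supported off $S_p$ and chosen to make $f_p$ vanish on $S_p$, which reduces (i) to a finite linear system in the $c_j$ and $h_p$; its solvability would be established using Weil-type character-sum bounds on the intersections $|S_p \cap H_j|$.

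The main obstacle, I expect, is simultaneously enforcing (i)--(iii) while keeping $f_p(0)$ quantitatively small. Nonnegativity of $\widehat{g_p}$ forces $g_p$ to be a positive combination of positive-definite functions, which pushes $g_p(0) = \tfrac{1}{n}\sum_k \widehat{g_p}(k)$ upward; the Hanson--Petridis bound essentially captures the minimum of this sum without further arithmetic input. Beating it requires exploiting the structure of $S_p$ to obtain additional cancellation at $k = 0$. A plausible endgame is a hybrid argument: prove the construction yields $\operatorname{LS}(p) < \operatorname{HP}(p) - \delta$ for some absolute $\delta > 0$ on a fixed residue class of primes, and then combine with equidistribution of $\sqrt{2p - 1} \bmod 1$ along that class to conclude $\operatorname{LS}(p) < \lfloor \operatorname{HP}(p) \rfloor$ infinitely often.
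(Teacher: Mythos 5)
What you are attempting to prove is \cref{conj:wewin}, which the paper itself leaves open: it offers only numerical evidence (the 17 primes $p<3000$ with $\operatorname{LS}(p)<\lfloor\operatorname{HP}(p)\rfloor$ in \cref{fig:p3000} and \cref{table:p3000}) and points to \cref{prop:paleylemma} as the intended route. Your proposal follows exactly that suggested route --- extract optimal dual certificates numerically, guess a structured ansatz, and verify feasibility via character-sum estimates --- but it is a research program, not a proof. The decisive step is missing: you never actually produce functions $f_p,g_p$ satisfying (i)--(iii) of \cref{prop:paleylemma} with $f_p(0)+1<\lfloor\operatorname{HP}(p)\rfloor$, and nothing in the coset-convolution ansatz is shown to achieve the required value of $f_p(0)$. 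Reducing (i) to ``a finite linear system'' solvable by Weil-type bounds only addresses feasibility of the constraints, not the size of the objective $f_p(0)$, which is the entire difficulty.

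The proposed endgame also conflicts with the paper's data. You posit a uniform gap $\operatorname{LS}(p)<\operatorname{HP}(p)-\delta$ on a fixed residue class of primes, to be combined with equidistribution of the fractional part of $\operatorname{HP}(p)$. But for the majority of primes $p<3000$ the paper observes $\operatorname{LS}(p)>\operatorname{HP}(p)$ (only 60 of 211 primes have $\operatorname{LS}(p)\leq\operatorname{HP}(p)$), and no mechanism is offered for why restricting to a residue class would reverse this; a statement of the form $\operatorname{LS}(p)<\operatorname{HP}(p)-\delta$ for all large $p$ in some class would itself be a major theorem, strictly stronger in spirit than the conjecture. So the gap is not a technical detail but the heart of the problem: the ``magic functions'' have not been constructed, and the quantitative claim that they beat the Hanson--Petridis value by a definite margin is asserted rather than argued.
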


With appropriate number-theoretic functions, one might use \cref{prop:paleylemma} to prove \cref{conj:wewin}.
This pursuit of ``magic functions'' bears some resemblance to recent progress in sphere packing; see Cohn~\cite{cohn17} for a survey.
We note that Gvozdenovi\'{c}, Laurent and Vallentin\cite{gvoz09} introduced a semidefinite programming hierarchy that gives numerical bounds on $\omega(G_p)$ that are sharper than $\operatorname{HP}(p)$ in the range $p \leq 809$.  However, these semidefinite programs are still rather slow.
For our linear programming computations, we used \texttt{GLPK} within \texttt{SageMath}~\cite{sage}.  We believe that our code could be sped up significantly by incorporating the fast Fourier transform\cite{vanderbei12}, possibly giving new bounds on $\omega(G_p)$ for significantly larger primes $p$.

\acknowledgments 
 
MM and DGM were partially supported by AFOSR FA9550-18-1-0107.
DGM was also supported by NSF DMS 1829955 and the Simons Institute of the Theory of Computing.

\bibliography{report} 
\bibliographystyle{spiebib} 

\begin{table}
\centering
\caption{Comparison of $\omega(G_p)$ with the upper bounds $\operatorname{HP}(p), \operatorname{L}(p)$ and $\operatorname{LS}(p)$ for the 63 primes $p \equiv 1~(\operatorname{mod}4)$ with $p < 3000$ and $\lfloor\operatorname{HP}(p)\rfloor~\neq~\lfloor\operatorname{LS}(p)\rfloor$.  For the 148 unlisted primes $p \equiv 1~(\operatorname{mod}4)$ with $p < 3000$, we observed $\lfloor\operatorname{HP}(p)\rfloor = \lfloor\operatorname{LS}(p)\rfloor$.}
\label{table:p3000}
\vspace{1em}
{\def\arraystretch{1.1}
\begin{tabular}{rrrrr}
\hline
 $p$ & $\omega(G_p)$ & $\operatorname{HP}(p)$ & $\operatorname{L}(p)$ & $\operatorname{LS}(p)$\\
 \hline
61 & 5 & 6.0000 & \textbf{5.9009} & \textbf{5.8886} \\
109 & 6 & \textbf{7.8655} & 8.0070 & 8.0018 \\
173 & 8 & \textbf{9.7871} & 10.3165 & 10.2339 \\
281 & 7 & 12.3427 & \textbf{11.9023} & \textbf{11.8916} \\
293 & 8 & \textbf{12.5934} & 13.1270 & 13.1145 \\
353 & 9 & \textbf{13.7759} & 14.4454 & 14.3045 \\
373 & 8 & 14.1473 & \textbf{13.7229} & \textbf{13.6952} \\
421 & 9 & 15.0000 & 15.0253 & \textbf{14.9892} \\
457 & 11 & \textbf{15.6079} & 16.3859 & 16.3503 \\
541 & 11 & \textbf{16.9393} & 17.4222 & 17.3589 \\
673 & 11 & \textbf{18.8371} & 19.0862 & 19.0251 \\
733 & 11 & \textbf{19.6377} & 20.3389 & 20.1800 \\
757 & 11 & \textbf{19.9487} & 20.1284 & 20.0668 \\
761 & 11 & 20.0000 & 20.0297 & \textbf{19.9851} \\
773 & 11 & 20.1532 & \textbf{19.8771} & \textbf{19.8033} \\
797 & 9 & 20.4562 & 20.1191 & \textbf{19.9988} \\
821 & 12 & \textbf{20.7546} & 21.3005 & 21.1115 \\
829 & 11 & \textbf{20.8531} & 21.1864 & 21.0711 \\
877 & 13 & \textbf{21.4344} & 22.2406 & 22.0372 \\
997 & 13 & \textbf{22.8215} & 23.5064 & 23.4550 \\
1009 & 11 & \textbf{22.9555} & 23.2465 & 23.0941 \\
1013 & 11 & 23.0000 & 23.0713 & \textbf{22.8647} \\
1033 & 11 & 23.2211 & 23.0159 & \textbf{22.9210} \\
1093 & 12 & \textbf{23.8720} & 24.2033 & 24.1343 \\
1181 & 12 & \textbf{24.7951} & 25.2438 & 25.1739 \\
1289 & 15 & \textbf{25.8821} & 26.4445 & 26.4064 \\
1373 & 14 & \textbf{26.6964} & 27.3171 & 27.1684 \\
1481 & 15 & \textbf{27.7075} & 28.5703 & 28.3694 \\
1489 & 13 & \textbf{27.7809} & 28.3456 & 28.1480 \\
1597 & 13 & \textbf{28.7533} & 29.0803 & 29.0021 \\
1613 & 14 & \textbf{28.8945} & 29.2067 & 29.1143 \\
1621 & 13 & \textbf{28.9649} & 29.8909 & 29.7006 \\
\end{tabular}
\hspace{1em}
\begin{tabular}{rrrrr}
\hline
 $p$ & $\omega(G_p)$ & $\operatorname{HP}(p)$ & $\operatorname{L}(p)$ & $\operatorname{LS}(p)$\\
 \hline
1697 & 13 & \textbf{29.6247} & 30.1311 & 30.0687 \\
1709 & 13 & \textbf{29.7276} & 30.6383 & 30.5067 \\
1721 & 13 & \textbf{29.8300} & 30.2173 & 30.1523 \\
1801 & 14 & \textbf{30.5042} & 31.3490 & 31.2143 \\
1949 & 14 & \textbf{31.7130} & 32.1343 & 32.0719 \\
1973 & 13 & \textbf{31.9046} & 32.3272 & 32.1354 \\
2017 & 13 & 32.2530 & \textbf{31.9977} & \textbf{31.8802} \\
2029 & 14 & \textbf{32.3473} & 33.3049 & 33.0499 \\
2081 & 14 & \textbf{32.7529} & 33.5041 & 33.3159 \\
2089 & 14 & \textbf{32.8149} & 33.3957 & 33.1789 \\
2113 & 13 & 33.0000 & \textbf{32.9818} & \textbf{32.6315} \\
2129 & 13 & 33.1228 & \textbf{32.8782} & \textbf{32.7089} \\
2141 & 13 & 33.2147 & \textbf{32.7483} & \textbf{32.6685} \\
2213 & 15 & \textbf{33.7603} & 34.5759 & 34.3880 \\
2221 & 15 & \textbf{33.8204} & 34.5585 & 34.4287 \\
2281 & 17 & \textbf{34.2676} & 35.2916 & 35.1050 \\
2309 & 15 & \textbf{34.4743} & 35.2676 & 35.0910 \\
2333 & 14 & \textbf{34.6504} & 35.1840 & 35.0862 \\
2357 & 15 & \textbf{34.8256} & 35.2750 & 35.0886 \\
2477 & 15 & \textbf{35.6888} & 36.4402 & 36.2304 \\
2549 & 15 & 36.1966 & 36.0154 & \textbf{35.9006} \\
2609 & 15 & \textbf{36.6144} & 37.5661 & 37.2694 \\
2617 & 15 & \textbf{36.6697} & 37.2249 & 37.0475 \\
2657 & 15 & \textbf{36.9452} & 37.9459 & 37.7595 \\
2677 & 15 & 37.0821 & 37.1579 & \textbf{36.9388} \\
2789 & 15 & \textbf{37.8397} & 38.6001 & 38.3378 \\
2797 & 15 & \textbf{37.8932} & 38.5759 & 38.4791 \\
2837 & 15 & 38.1597 & 38.1846 & \textbf{37.9661} \\
2861 & 16 & 38.3186 & \textbf{37.8309} & \textbf{37.6733} \\
2897 & 15 & \textbf{38.5559} & 39.4579 & 39.1647 \\
2909 & 15 & \textbf{38.6346} & 39.2540 & 39.0498 \\
\phantom{2909} & \\
\end{tabular}
}
\end{table}

\end{document}